\pgfplotsset{compat=1.14}
\title[Small ball probabilities for the passage time]{Small ball probabilities for the passage time in planar first-passage percolation}
\date{}
\author{Dor Elboim}
\address{Dor Elboim\hfill\break
    Department of Mathematics,
    Stanford University,
    California, United States.}
\email{dorelboim@gmail.com}
\newtheorem{thm}{Theorem}[section]
\newtheorem{lem}[thm]{Lemma}  
\newtheorem{cor}[thm]{Corollary}
\numberwithin{equation}{section}
\begin{document}
\begin{abstract}
   We study planar first-passage percolation with independent weights whose common distribution is supported in $(0,\infty)$ and is absolutely continuous with respect to Lebesgue measure. We prove that the passage time from $x$ to $y$ denoted by $T(x,y)$ satisfies 
   \begin{equation}
       \max _{a\ge 0} \mathbb P \big(  T(x,y)\in [a,a+1] \big) \le \frac{C}{\sqrt{\log \|x-y\|}},
   \end{equation}
 answering a question posed by Ahlberg and de la Riva \cite{ahlberg2023being}. This estimate recovers earlier results on the fluctuations of the passage time by Newman and Piza \cite{newman1995divergence}, Pemantle and Peres \cite{pemantle1994planar} and Chatterjee \cite{chatterjee2019general}.
\end{abstract}

\maketitle
\section{Introduction}
First-passage percolation is a model for a random metric space, formed by a random perturbation of an underlying base space. Since its introduction by Hammersley--Welsh in 1965~\cite{HammersleyWelsh}, it has been studied extensively in the probability and statistical physics literature. We refer to \cite{Kesten:StFlour} for general background and to \cite{50years} for more recent results.


We study first-passage percolation on the square lattice $(\mathbb Z^2,E(\mathbb{Z}^2))$, in an independent and identically distributed (IID) random environment. The model is specified by a \emph{weight distribution $G$}, which is a probability measure on the non-negative reals. It is defined by assigning each edge $e\in E(\mathbb{Z}^2)$ a random passage time $t_e$ with distribution $G$, independently between edges. Then, each finite path $p$ in $\mathbb Z ^2$ is assigned the random passage time
\begin{equation}\label{eq:time of a path}
    T(p):=\sum _{e\in p} t_e,
\end{equation}
yielding a random metric $T$ on $\mathbb Z^2$ by setting the \emph{passage time between $u,v\in \mathbb Z ^2$} to
\begin{equation}\label{eq:defT}
    T(u,v):=\inf_{p} T(p),
\end{equation}
where the infimum ranges over all finite paths connecting $u$ and $v$. Any path achieving the infimum is termed a \emph{geodesic} between $u$ and $v$. A unique geodesic exists when $G$ is atomless and will be denoted $\gamma(u,v)$. The focus of first-passage percolation is the study of the large-scale properties of the random metric $T$ and its geodesics.

Ahlberg and de la Riva \cite[Equation (11) and Section 7]{ahlberg2023being} raised the problem of proving that the passage time between far away points is unlikely to be in any given interval of constant length. More precisely, they conjectured that for all $C>0$ one has 
\begin{equation}
    \max _{a>0} \mathbb P \big( T(0,(n,0))\in [a,a+C] \big) \to 0, \quad n\to \infty .
\end{equation}
In our main result below we establish a quantitative version of this conjecture for the class of absolutely continuous weight distributions.

\begin{thm}\label{thm:small}
    Suppose that the weight distribution $G$ is absolutely continuous. Then, there exists a constant $C>0$ depending on $G$ such that for any $x\in \mathbb Z ^2$ with $\|x\|\ge 2$
    \begin{equation}\label{eq:constant number of edges}
    \max _{a \ge 0}  \mathbb P \big( T(0,x)\in [a,a+1] \big)   \le \frac{C}{\sqrt{\log \|x\|}}.
\end{equation}
\end{thm}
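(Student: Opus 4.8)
The plan is to pass to characteristic functions, extract a definite amount of ``spreading'' from each of $\asymp\log\|x\|$ disjoint spatial scales using the absolute continuity of $G$, and multiply the contributions. First, by the Esseen smoothing inequality it suffices to bound the characteristic function of $T(0,x)$:
\[
\max_{a\ge0}\mathbb{P}\big(T(0,x)\in[a,a+1]\big)\ \le\ C\int_{-1}^{1}\big|\mathbb{E}\,e^{i\xi T(0,x)}\big|\,d\xi ,
\]
so the goal becomes $\int_{-1}^{1}\big|\mathbb{E}\,e^{i\xi T(0,x)}\big|\,d\xi\le C(\log\|x\|)^{-1/2}$.

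The basic mechanism is a single-edge resampling identity. Fix an edge $e$ and condition on $\mathcal{F}_{-e}$, the weights of all edges other than $e$; then $T(0,x)=\min(A_e,\,B_e+t_e)$, where $A_e$ is the cheapest passage time over $0$--$x$ paths avoiding $e$ and $B_e+t_e$ the cheapest over paths using $e$, so that $A_e,B_e$ are $\mathcal{F}_{-e}$-measurable. Writing $W_e:=(A_e-B_e)_+$ for the length of the interval of values of $t_e$ on which $e$ lies on the geodesic, one obtains
\[
\mathbb{E}\big[e^{i\xi T(0,x)}\,\big|\,\mathcal{F}_{-e}\big]=e^{i\xi B_e}\int_0^{W_e}e^{i\xi s}\,dG(s)+\big(1-G(W_e)\big)e^{i\xi A_e}.
\]
Because $G$ is absolutely continuous there are constants $c,\delta_0>0$, depending only on $G$, with $\big|\int_0^{W}e^{i\xi s}\,dG(s)\big|\le G(W)\big(1-c\xi^2\big)$ for all $W\ge\delta_0$ and all $|\xi|\le1$ (for $\xi$ near $0$ this is a uniform spread lower bound for $G$ restricted to $[0,W]$, using two disjoint subintervals of positive $G$-mass; for $\xi$ bounded away from $0$ it is the Riemann--Lebesgue lemma). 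Hence
\[
\big|\,\mathbb{E}[e^{i\xi T(0,x)}\mid\mathcal{F}_{-e}]\,\big|\ \le\ 1-c\,\xi^2\,\ind\{W_e\ge\delta_0\},\qquad |\xi|\le1 .
\]

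To turn this into the logarithmic factor, I would fix $m\asymp\log\|x\|$ disjoint dyadic annuli $\cA_1,\dots,\cA_m$ around $0$, each separating $0$ from $x$, so that every path from $0$ to $x$ crosses each of them; choose one edge $e_j$ in each annulus and condition on all weights except $t_{e_1},\dots,t_{e_m}$. Conditionally these $m$ weights are i.i.d.\ $\sim G$, and $T(0,x)=g(t_{e_1},\dots,t_{e_m})$ with $g(s)=\min_{S\subseteq\{1,\dots,m\}}\big(c_S+\sum_{j\in S}s_j\big)$, where $c_S$ is the cheapest passage time over paths from $0$ to $x$ using exactly the special edges indexed by $S$. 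Revealing $e_1,\dots,e_m$ one at a time and applying the single-edge estimate at each stage, one wants every reveal to multiply the conditional characteristic function by a factor at most $1-c\xi^2$ whenever the relevant window is $\ge\delta_0$; on the event that this occurs for a positive proportion of the scales one gets $\big|\mathbb{E}\,e^{i\xi T(0,x)}\big|\le(1-c\xi^2)^{c'm}+(\text{small error})$, whence $\int_{-1}^{1}\big|\mathbb{E}\,e^{i\xi T(0,x)}\big|\,d\xi\lesssim m^{-1/2}\lesssim(\log\|x\|)^{-1/2}$, closing the argument.

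The hard part will be exactly this amplification, because the special edges, although spatially disjoint, are coupled through the global minimum defining $T$, so the single-edge estimates do not literally multiply. Concretely, if one conditions further on the geodesic then $T(0,x)$ equals a constant plus $\sum_j t_{e_j}$, but $(t_{e_j})_j$ is then conditioned to lie in a polytope $\big\{\sum_{j\in Q}t_{e_j}\le W_Q:\ \emptyset\ne Q\subseteq\{1,\dots,m\}\big\}$, and one needs the ``windows'' $W_Q$ to be large enough (with $W_{\{1,\dots,m\}}$ of order at least $\sqrt m$) for $\sum_j t_{e_j}$ to stay spread out on its natural scale. Equivalently, one must show that with high probability the geodesic from $0$ to $x$ passes through $\asymp\log\|x\|$ well-separated edges whose deletion forces reroutes more expensive by a constant, roughly additively over the chosen edges. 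Proving this robustly — using that the geodesic has at least $\|x\|_1$ edges, the two-dimensional geometry, and an averaging argument to select the $e_j$ — is where I expect essentially all the work to lie; by comparison the Esseen reduction and the single-edge estimate are routine.
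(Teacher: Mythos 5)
Your reduction via Esseen and the single-edge resampling identity are fine, but the argument has a genuine gap exactly where you place it, and the gap is not a technicality one could expect to fill routinely: it is the whole problem. Two distinct obstructions arise. First, the conditional bounds do not amplify. After you reveal $t_{e_1},\dots,t_{e_{m-1}}$ and apply the single-edge estimate to $e_m$, you must take the modulus of the conditional characteristic function to use the bound $1-c\xi^2\ind\{W_{e_m}\ge\delta_0\}$; this destroys the phase information needed to extract further decay from $e_{m-1},e_{m-2},\dots$, so you only obtain one factor of $(1-c\xi^2)$, not $c'm$ of them. The special edges are coupled through the global minimum, so there is no conditional independence to restore the product; one would need a genuine decoupling device (e.g.\ a replica/resampling identity for $|\EE e^{i\xi T}|^2$), and none is proposed. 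Second, even granting amplification, you need that with high probability a positive proportion of the $\asymp\log\|x\|$ annuli contain an edge whose window $W_e=(A_e-B_e)_+$ exceeds a fixed constant $\delta_0$, and that the joint windows satisfy $W_{\{1,\dots,m\}}\gtrsim\sqrt m$. The first of these is a quantitative pivotality statement that is at least as hard as the Newman--Piza variance lower bound (which only controls influences in a much weaker $L^2$-averaged sense), and the second is essentially the $\sqrt{\log\|x\|}$ fluctuation lower bound itself, i.e.\ close to circular.

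The paper circumvents precisely this difficulty by never isolating pivotal edges. Instead it perturbs \emph{every} edge of the $k$-th dyadic annulus upward by $\tau_r(e)\asymp r\,2^{-k}/\sqrt{\log n}$ using a Mermin--Wagner type monotone coupling $g_\tau$ for absolutely continuous $G$. Since any path crossing the annulus uses at least $2^k$ edges, and with overwhelming probability a constant fraction of them lie in the good set where $g_\tau(s)\ge s+\delta_0\tau$, the perturbation increases the weight of \emph{every} crossing path by $\gtrsim r/\sqrt{\log n}$ per annulus, hence the passage time by $\gtrsim r\sqrt{\log n}$ after summing over the $\asymp\log n$ annuli — deterministically over paths, with no pivotality input. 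Consequently $T_r(0,x)$ is increasing in $r$ and exits any unit interval after $r$ moves by $\asymp 1/\sqrt{\log n}$, so the set of $r\in[-1,1]$ with $T_r(0,x)\in[a,a+1]$ has measure $\lesssim 1/\sqrt{\log n}$; the Mermin--Wagner inequality, whose entropy cost $e^{\|\tau_r\|_2^2/2}$ stays bounded because $|\Lambda_k|\asymp 4^k$ exactly cancels the $2^{-k}$ in $\tau_r$, then transfers this averaged bound back to the unperturbed law. If you want to salvage a Fourier-analytic route, you would need to replace your per-edge pivotality input with an analogous ``all edges in an annulus contribute a little'' mechanism, at which point you are essentially reconstructing the paper's perturbation argument.
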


\subsection{Previous works}
Pemantle and Peres \cite{pemantle1994planar} proved an estimate similar to \eqref{eq:constant number of edges} when the weight distribution $G$ is exponential. The proof relies on the memoryless property of the exponential distribution. We are not aware of a similar result for any other weight distribution.

The estimate in \eqref{eq:constant number of edges} is closely related to anti-concentration bounds for the passage time. The first result of this kind is by Newman and Piza \cite{newman1995divergence} who proved that
\begin{equation}\label{eq:var}
    \text{Var}(T(0,x))\ge c\log \|x\|,
\end{equation}
for a large family of weight distributions. Later, \cite{zhang2008shape} and \cite{auffinger2013differentiability} extended the result to a class of weight distributions in which there is a heavy atom at the essential infimum of the distribution, and to directions that are outside of the percolation cone (see \cite[Section~1.1]{auffinger2013differentiability} for more details).

Chatterjee \cite[Theorem~2.6]{chatterjee2019general} proved the stronger result that the fluctuations of the passage time are at least of order $\sqrt{\log \|x\|}$ by showing that there exists $c>0$ such that for all $x\in \mathbb Z ^2$ and all $a<b$ with $b-a\le c\sqrt{\log \|x\|}$,
\begin{equation}\label{eq:fluc}
    \mathbb P \big( T(0,x) \notin [a,b]  \big)\ge c.
\end{equation}
This is done for absolutely continuous weight distributions whose density decays faster than any polynomial, and is of the form $e^{-V}$, where $V$ is smooth and all its derivatives grow at most polynomially. This was generalized to a wider class of weight distributions by Damron, Hanson, Houdr{\'e} and Xu \cite{damron2020lower} and independently by Bates and Chatterjee \cite{bates2020fluctuation}.

Observe that that both \eqref{eq:var} and \eqref{eq:fluc} follow from Theorem~\ref{thm:small} for the class of absolutely continuous weight distributions. 

Finally, let us remark that the perturbation argument used in this paper is similar to that of Chatterjee \cite{chatterjee2019general} with one notable difference. In \cite{chatterjee2019general} the author considers a single perturbation of the weights while in here we are continuously perturbing the weights. We then show that this continuous perturbation quickly pushes the passage time out of any given interval. To establish this, we use a slightly more robust perturbation lemma that holds in greater generality and captures small probabilities (see Section~\ref{sec:mermin} below).

\subsection{Acknowledgments}
We thank Barbara Dembin, Ron Peled, Daniel Ahlberg and Daniel de la Riva for fruitful discussions about this problem. We thank the two referees of the paper for
their comments.

\section{A Mermin--Wagner type estimate}\label{sec:mermin}

The following lemma is the main technical tool required for the proof of the main theorem. The lemma is a Mermin--Wagner type estimate and is taken from \cite[Lemma~2.12]{dembin2024coalescence}. A similar lemma was used also in \cite{dembin2025influence} and \cite{dembin2025noise}.

\begin{lem}\label{lem:MW}
  Suppose that $G$ is absolutely continuous distribution on $\mathbb R$. Then, there exist 
  \begin{itemize}
  \item  A Borel set $S_G$ contained in the closure of the support of $G$, with $G(S_G)=1$.
      \item Borel subsets $(B_{\delta })_{\delta>0}$ of $S_G$ with $\lim_{\delta\downarrow 0}G(B_\delta)=1$,
      \item For each $\tau \in \mathbb R$, a bijection $g_{\tau }:S_G\to S_G$.
  \end{itemize}
   such that the following holds:
  \begin{enumerate}
  \item The function $g_\tau (s)$ is increasing both in $s$ and in $\tau $.
      \item $g_0$ is the identity function on $S_G$ and for any $\tau _1,\tau _2\in \mathbb R$ we have $g_{\tau _1}\circ g_{\tau _2}=g_{\tau _1+\tau _2}$.
      \item For any $\tau \in[0,1]$ and $s\in B_\delta $ we have  $g_{\tau }(s)\ge s+\delta \tau $.
      \item For any integer $n\ge 1$, a vector $\tau\in \mathbb R ^n$ and a Borel set $A\subset\mathbb{R}^n$ we have
      \begin{equation}\label{eq:MW probability estimate}
             \mathbb P (X\in A) \le e^{\|\tau \|_2^2/2} \sqrt{ \mathbb P \big(  (g_{\tau _i}(X_i)) _{i=1}^n\in A \big)\mathbb P \big( (g_{-\tau_i}(X_i))_{i=1}^n \in A \big)  }
      \end{equation}
      where $X=(X_1,\dots ,X_n)$ is a vector of i.i.d.\ random variables with distribution $G $.
  \end{enumerate}
\end{lem}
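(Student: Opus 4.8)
\emph{Proof plan.} The plan is to build everything from the \emph{Gaussian quantile transport}. Let $F$ be the distribution function of $G$ and $q(u)=\inf\{x:F(x)\ge u\}$, $u\in(0,1)$, its quantile function; since $G$ is absolutely continuous, $F$ is continuous, hence $F(q(u))=u$ and $q$ is strictly increasing. With $\Phi,\phi$ the standard Gaussian distribution function and density, I would set $h:=q\circ\Phi$ and $S_G:=h(\mathbb R)$. Then $h$ is a strictly increasing Borel bijection of $\mathbb R$ onto $S_G$ with Borel inverse, and $h(Z)\sim G$ for $Z\sim N(0,1)$, so $G(S_G)=1$. Define $g_\tau:S_G\to S_G$ by $g_\tau(s):=h(h^{-1}(s)+\tau)$. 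Properties (1) and (2) are then immediate: $g_\tau$ is a strictly increasing bijection, increasing also in $\tau$ because $h$ is increasing, with $g_0=\mathrm{id}$ and $g_{\tau_1}\circ g_{\tau_2}=g_{\tau_1+\tau_2}$ since translations compose additively.

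For (4) I would realize $X\sim G$ as $X=h(Z)$ with $Z\sim N(0,1)$, so that $g_\tau(X)=h(Z+\tau)$; hence the law $G_\tau$ of $g_\tau(X)$ is the push-forward under $h$ of $N(\tau,1)$, and since $h$ is a Borel bijection and $\frac{dN(\tau,1)}{dN(0,1)}(z)=e^{\tau z-\tau^2/2}$, one gets $G_\tau\ll G$ with $\frac{dG_\tau}{dG}(s)=e^{\tau h^{-1}(s)-\tau^2/2}$ for $s\in S_G$, and in particular the \emph{pointwise} identity $\frac{dG_\tau}{dG}(s)\cdot\frac{dG_{-\tau}}{dG}(s)=e^{-\tau^2}$. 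Writing $p^{\pm}(s)=\prod_i\frac{dG_{\pm\tau_i}}{dG}(s_i)$ for the density of the law of $(g_{\pm\tau_i}(X_i))_i$ with respect to $G^{\otimes n}$, one has $p^+p^-\equiv e^{-\|\tau\|_2^2}$ on $S_G^n$, so, since $G^{\otimes n}$ lives on $S_G^n$,
\[
\P(X\in A)=e^{\|\tau\|_2^2/2}\int_A\sqrt{p^+}\,\sqrt{p^-}\,dG^{\otimes n}\le e^{\|\tau\|_2^2/2}\Big(\int_A p^+\,dG^{\otimes n}\Big)^{1/2}\Big(\int_A p^-\,dG^{\otimes n}\Big)^{1/2}
\]
by Cauchy--Schwarz, which is \eqref{eq:MW probability estimate} once one recognizes $\int_A p^{\pm}\,dG^{\otimes n}=\P\big((g_{\pm\tau_i}(X_i))_i\in A\big)$.

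For (3) the only viable choice is to \emph{define} $B_\delta:=h(Z_\delta)$ with $Z_\delta:=\{z\in\mathbb R: h(z+\tau)-h(z)\ge\delta\tau\text{ for all }\tau\in(0,1]\}$, so that (3) holds by construction, and then prove $G(B_\delta)=\P(Z\in Z_\delta)\to1$ as $\delta\downarrow0$. Setting $\Psi(z):=\inf_{\tau\in(0,1]}\frac{h(z+\tau)-h(z)}{\tau}\in[0,\infty)$ one has $Z_\delta=\{\Psi\ge\delta\}$, and since $\{\Psi<\delta\}$ decreases to $\{\Psi=0\}$ as $\delta\downarrow0$ while $N(0,1)$ is a finite measure that is absolutely continuous with respect to Lebesgue measure, it suffices to show $\{\Psi=0\}$ is Lebesgue-null. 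Because $h$ is strictly increasing, $\frac{h(z+\tau)-h(z)}{\tau}$ is bounded away from $0$ for $\tau$ bounded away from $0$, so $\Psi(z)=0$ forces the lower right Dini derivative of $h$ at $z$ to vanish; $h$ being monotone is differentiable a.e., so up to a null set $\{\Psi=0\}\subseteq\{h'=0\}$. Finally $h'=q'(\Phi)\,\phi$ a.e., so $\{h'=0\}$ is, up to null sets, the $\Phi$-preimage of $\{q'=0\}$; and here absolute continuity of $G$ enters: $q'(u)=0$ forces $F$ to have an infinite or non-existent derivative at $q(u)$, which for the absolutely continuous $F$ occurs only on a Lebesgue-null set of $x$, whose image under $F$ is null by Luzin's property (N). Pulling back through the diffeomorphism $\Phi$ shows $\{h'=0\}$ is null, completing (3).

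\emph{Main obstacle.} Properties (1), (2), (4) amount to bookkeeping around the transport map; the delicate point is (3). The difficulty is that requiring $g_\tau(s)\ge s+\delta\tau$ for \emph{all} $\tau\in[0,1]$ is a statement on a whole unit-length interval, so a single point where $h$ fails to increase at a definite rate a priori contaminates an interval of $z$'s of positive measure. The resolution is that the genuinely bad set is $\{\Psi=0\}$, which coincides up to null sets with $\{h'=0\}$, and this set is Lebesgue-null precisely because $G$ is absolutely continuous: strictly increasing functions can otherwise have vanishing derivative on a set of full measure (Cantor-type examples), and it is Luzin's property (N) for the absolutely continuous $F$ that excludes this. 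A more general weight distribution would require a transport adapted to a different reference measure, and a correspondingly different base inequality.
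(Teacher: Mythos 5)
Your construction is precisely the one the paper (and the cited source \cite{dembin2022coalescence}) uses: transport the Gaussian translation structure through the increasing map $h$ with $h(N)\sim G$, obtain (4) by Cauchy--Schwarz on the transported densities, and take $B_\delta$ to be the image under $h$ of the set where the unit-scale difference quotients of $h$ are at least $\delta$. Your treatment of part (3) --- reducing to showing $\{h'=0\}$ is Lebesgue-null via a.e.\ differentiability of monotone functions and Luzin's property (N) of the absolutely continuous $F$ --- correctly supplies the one step the paper's sketch leaves implicit, so the proposal is correct and follows essentially the same route.
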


\begin{proof}[Proof sketch]
     Let us briefly explain the main ideas in the proof of the lemma while the full argument is given in \cite{dembin2024coalescence}. First, we consider the case when $G=N(0,1)$ is the standard normal distribution. In this case the functions $g_\tau $ are simply taken to be $g_\tau (s):=s+\tau $. With this choice the inequality \eqref{eq:MW probability estimate} translates to
    \begin{equation}\label{eq:gauss}
             \mathbb P (X\in A) \le e^{\|\tau \|_2^2/2} \sqrt{ \mathbb P \big(  X+\tau \in A \big)\mathbb P \big( X-\tau  \in A \big)  }
      \end{equation}
     which follows from a simple application of Cauchy-Schwarz inequality (see \cite[Claim~2.13]{dembin2024coalescence}).
     
     For a general continuous distribution $G$, we consider the (unique) increasing function $h:\mathbb R \to S_G$ with $h(N)\sim G$ where $N$ is a standard normal variable. Then, we define the function $g_\tau$ by $g_\tau (s):=h(h^{-1}(s) +\tau )$ and prove that this function satisfies the requirements (the inequality \eqref{eq:MW probability estimate} follows immediately as it is the push forward of \eqref{eq:gauss} by $h$).

     Finally, let us note that the statement of  \cite[Lemma~2.12]{dembin2024coalescence} does not include parts (1) and (2) of Lemma~\ref{lem:MW}. However, these parts follow immediately from our definition of the perturbation function $g_\tau (s)=h(h^{-1}(s) +\tau )$.
\end{proof}

\section{Proof of Theorem~\ref{thm:small}}\label{sec:constant}

Let $x\in \mathbb Z ^2$ with $\|x\|$ is sufficiently large and let $n:=\|x\|$. Let $\Lambda _k$ be the set of edges with both endpoints in the annulus $[-2^{k+1},2^{k+1}]^2\setminus [-2^k,2^k]^2$ or one endpoint in the annulus and the other in $[-2^k,2^k]$. Let $k_0:=\lfloor \log _2\sqrt{n} \rfloor $ and $k_1:=\lfloor \log _2n \rfloor -1$.

For $r\in [-2,2]$ we define the perturbation function $\tau _r:E(\mathbb Z ^d)\to \mathbb R$ by
\begin{equation}
    \tau _r (e):= \frac{r}{2^k\sqrt{\log n}}, 
\end{equation}
for all $e\in \Lambda _k$ with $k_0\le k\le k_1$. We let $\tau _r(e)=0$ for edges outside of $\bigcup _{k=k_0}^{k_1}\Lambda _k$.

We define the modified environment by $t_{e,r}:=g_{\tau _r(e)}(t_e)$, where $g_\tau $ is from Lemma~\ref{lem:MW}. Note that the modified weights are still positive. We denote by $T_r(p)$ the weight a path $p$ in the modified environment and by $T_r(x,y)$ the passage time between $x$ and $y$ in the modified environment. In our proof we continuously vary $r$ and estimate the effect it has on the passage time $T(0,x)$.

It is important that for all $r\in [-1,1]$ we have 
\begin{equation}
    \|\tau _r\|_2^2 \le  \sum _{k=k_0}^{k_1} \frac{|\Lambda _k|}{4^k \log n} \le \frac{C(k_1-k_0)}{\log n} \le C 
\end{equation}
so that in Lemma~\ref{lem:MW} the factor $e^{\|\tau \|_2^2/2}$ remains constant.

We begin by showing that the weight of paths in scale $k$ is increased by this perturbation. To this end let $\mathcal P _k$ be the set of paths of length $2^k$ whose edges are in $\Lambda _k$ and let $\delta _0$ such that $G(B_{\delta _0})\ge 0.999$. 

\begin{lem}\label{lem:1}
For any $k_0\le k \le k_1$ and $ r\in [0,1]$ we have that 
\begin{equation}
\mathbb P \Big( \exists p\in \mathcal P _k, \  T_{r}(p)-T(p)\le \frac{\delta _0r}{2\sqrt{\log n}}  \Big) \le e^{-2^k}.    
\end{equation}
\end{lem}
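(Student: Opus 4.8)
The plan is to fix a path $p \in \mathcal{P}_k$ and control, for each edge $e \in p$, the increment $t_{e,r} - t_e = g_{\tau_r(e)}(t_e) - t_e$. By construction every edge of $p$ lies in $\Lambda_k$, so $\tau_r(e) = r/(2^k\sqrt{\log n})$, which for $r \in [0,1]$ and $k \ge k_0$ is a number in $[0,1]$; hence part (3) of Lemma~\ref{lem:MW} applies: whenever $t_e \in B_{\delta_0}$ we have $t_{e,r} - t_e \ge \delta_0 \tau_r(e) = \delta_0 r/(2^k\sqrt{\log n})$, and by part (1) of Lemma~\ref{lem:MW} (monotonicity in $\tau$, with $g_0$ the identity) the increment is always nonnegative. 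Summing over the $2^k$ edges of $p$, if at least half of them have their weight in $B_{\delta_0}$, then
\begin{equation}
T_r(p) - T(p) = \sum_{e \in p}\big(t_{e,r} - t_e\big) \ge \frac{2^k}{2}\cdot\frac{\delta_0 r}{2^k\sqrt{\log n}} = \frac{\delta_0 r}{2\sqrt{\log n}}.
\end{equation}
So the bad event $\{\exists p \in \mathcal{P}_k : T_r(p) - T(p) \le \delta_0 r/(2\sqrt{\log n})\}$ is contained in the event that \emph{some} path $p \in \mathcal{P}_k$ has more than half of its edges with weight outside $B_{\delta_0}$.

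Next I would estimate the probability of the latter event by a union bound over $\mathcal{P}_k$. A path of length $2^k$ in the box $[-2^{k+1},2^{k+1}]^2$ is determined by a starting vertex (at most $C 4^k \le C'2^{2^k}$ choices, crudely) and a sequence of $2^k$ directional steps ($4^{2^k}$ choices), so $|\mathcal{P}_k| \le e^{O(2^k)}$. For a fixed such path, the events $\{t_e \notin B_{\delta_0}\}$ over its $2^k$ distinct edges are independent, each of probability $1 - G(B_{\delta_0}) \le 0.001$; the probability that more than $2^{k-1}$ of them occur is at most $\binom{2^k}{2^{k-1}}(0.001)^{2^{k-1}} \le 2^{2^k}(0.001)^{2^{k-1}} = e^{-c\, 2^k}$ for an explicit $c > 0$. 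Combining with the union bound gives a total of $e^{O(2^k)} \cdot e^{-c\,2^k}$.

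The main technical point — and the place to be a little careful rather than a genuine obstacle — is the book-keeping of constants: one must verify that the exponential decay rate $c$ from the binomial tail genuinely beats the entropy rate $O(1)$ coming from $|\mathcal{P}_k|$, once $G(B_{\delta_0}) \ge 0.999$ is fixed (which we may do, shrinking $\delta_0$ as needed, since $G(B_\delta) \to 1$ as $\delta \downarrow 0$ by Lemma~\ref{lem:MW}). Since the number of steps dominates the number of starting points in the exponent, it suffices that $4 \cdot \binom{2}{1}(0.001)^{1/2} < 1$ after taking $2^k$-th roots, which holds comfortably; absorbing all constants and using $2^k \ge 2^{k_0} \ge c\sqrt{n}$ is large, the bound $e^{O(2^k)}e^{-c\,2^k}\le e^{-2^k}$ follows for $n$ large enough. (If one wants the clean statement for all admissible $k$ uniformly, simply choose $\delta_0$ small enough that the constant works already at the combinatorial level, independently of $k$.) Finally I would note that $r$ enters only through the harmless factor $\delta_0 r/(2\sqrt{\log n})$ on the right-hand side and otherwise plays no role, so the bound is uniform in $r \in [0,1]$.
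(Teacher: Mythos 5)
Your proposal is correct and follows essentially the same route as the paper: reduce the event for a fixed path to ``at least half the edges of $p$ have weight outside $B_{\delta_0}$'' via part (3) of Lemma~\ref{lem:MW} (plus nonnegativity of the increments), bound that by a binomial tail, and finish with a union bound over $|\mathcal P_k|\le e^{O(2^k)}$ paths, using $2^k\ge 2^{k_0}$ large to absorb constants. The only differences are cosmetic (you track the nonnegativity of $g_{\tau}(s)-s$ and the constant bookkeeping slightly more explicitly than the paper does).
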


\begin{proof}
    Let $k_0\le k\le k_1$ and $r\in [0,1]$. Recall that if $t_e\in B_{\delta _0}$ then 
    \begin{equation}
     t_{e,r}=g_{\tau _r(e)} (t_e)\ge t_e+\delta _0\tau _r(e)=t_e+\frac{\delta _0r}{2^k\sqrt{\log n}}.
    \end{equation}
     Thus, for a fixed path $p\in \mathcal P_k$ we have 
    \begin{equation}
    \begin{split}
\mathbb P \Big( T_{r}(p)-T(p)\le   \frac{\delta _0r}{2\sqrt{\log n}}  \Big) \le \mathbb P \big( \big| \big\{e\in p : t_e\in B_{\delta _0} \big\} \big| \le 2^k/2 \big) \le \\
\mathbb P \big( \text{Bin}(2^k,0.999) \le 2^k/2 \big) \le 8^{-2^k}.    
\end{split}
\end{equation}
Moreover, we have that $|\mathcal P_k| \le C4^{k}3^{2^k}$ as we can first choose the stating point of $p\in \mathcal P_k$ and then choose each step of the path. The lemma follows from a union bound over $p\in \mathcal P _k$ as long as $n$ is sufficiently large.
\end{proof}

\begin{cor}\label{cor:2}
    For any $k_0\le k\le k_1$, $s\in [-1,1]$ and $r\in [0,1]$ we have that 
\begin{equation}
\mathbb P \Big( \exists p\in \mathcal P _k, \  T_{s+r}(p)-T_s(p)\le \frac{\delta _0r}{2\sqrt{\log n}}  \Big) \le Ce^{-2^{k-1}}.    
\end{equation}
\end{cor}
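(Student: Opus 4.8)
The plan is to reduce Corollary~\ref{cor:2} to Lemma~\ref{lem:1} by exploiting the group property of the perturbation functions stated in part (2) of Lemma~\ref{lem:MW}. The key observation is that the modified environment with parameter $s+r$ is obtained from the modified environment with parameter $s$ by applying the perturbation $g_{\tau_r(e)}$ to each weight $t_{e,s}$; indeed, since $g_{\tau_{s+r}(e)} = g_{\tau_s(e) + \tau_r(e)} = g_{\tau_r(e)} \circ g_{\tau_s(e)}$, we have $t_{e,s+r} = g_{\tau_r(e)}(t_{e,s})$. Thus $T_{s+r}(p) - T_s(p) = \sum_{e\in p}\big(g_{\tau_r(e)}(t_{e,s}) - t_{e,s}\big)$, which has exactly the same form as the increment controlled in Lemma~\ref{lem:1}, but now with the base weights $(t_{e,s})_e$ in place of $(t_e)_e$.

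So the proof strategy is: first I would record the identity $t_{e,s+r} = g_{\tau_r(e)}(t_{e,s})$ just noted. Then I would observe that the weights $(t_{e,s})_{e\in\Lambda_k} = (g_{\tau_s(e)}(t_e))_{e\in\Lambda_k}$ are independent (functions of disjoint edge weights) but \emph{not} identically distributed unless $\tau_s(e)$ is constant on $\Lambda_k$ — and in fact $\tau_s$ \emph{is} constant on each $\Lambda_k$, equal to $s/(2^k\sqrt{\log n})$, so $(t_{e,s})_{e\in\Lambda_k}$ is an i.i.d.\ family with distribution $g_{s/(2^k\sqrt{\log n})}$ pushed forward from $G$. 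The issue is that Lemma~\ref{lem:1} was proved for the specific starting distribution $G$, using that $G(B_{\delta_0})\ge 0.999$; for the pushed-forward distribution we need $\mathbb P(t_{e,s}\in B_{\delta_0})$ to still be close to $1$. Since $g_{\tau_s(e)}$ is a bijection of $S_G$ and $\tau_s(e) = s/(2^k\sqrt{\log n})$ is tiny (bounded by $1/(2^{k_0}\sqrt{\log n}) \le 2^{-k_0}$), one expects $g_{\tau_s(e)}^{-1}(B_{\delta_0})$ to still have $G$-measure close to $1$, but quantifying this uniformly requires a little care.

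The cleanest route, which I expect to be the main technical point, is to avoid re-running the binomial estimate on a perturbed distribution and instead argue directly as in Lemma~\ref{lem:1}: by part (3) of Lemma~\ref{lem:MW}, if $t_{e,s}\in B_{\delta_0}$ then $t_{e,s+r} = g_{\tau_r(e)}(t_{e,s}) \ge t_{e,s} + \delta_0\tau_r(e) = t_{e,s} + \delta_0 r/(2^k\sqrt{\log n})$, so on the event that at least half the edges of a path $p\in\mathcal P_k$ have $t_{e,s}\in B_{\delta_0}$ we get $T_{s+r}(p) - T_s(p) \ge \delta_0 r/(2\sqrt{\log n})$. Hence
\begin{equation}
\mathbb P \Big( \exists p\in \mathcal P _k, \ T_{s+r}(p)-T_s(p)\le \frac{\delta _0r}{2\sqrt{\log n}}  \Big) \le |\mathcal P_k| \max_{p\in\mathcal P_k}\mathbb P\big(\big|\{e\in p: t_{e,s}\in B_{\delta_0}\}\big| \le 2^k/2\big).
\end{equation}
It remains to bound $\mathbb P(t_{e,s}\in B_{\delta_0})$ from below, uniformly in $|s|\le 1$ and $k_0\le k\le k_1$. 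Here I would use that $B_{\delta_0}\subseteq S_G$ and $g_{\tau_s(e)}$ is a bijection of $S_G$, and that $\tau_s(e)\le 2^{-k_0} \le 1/\sqrt n$ is small; one can either invoke a uniform-continuity/absolute-continuity estimate to conclude $\mathbb P(t_{e,s}\in B_{\delta_0}) = G(g_{\tau_s(e)}^{-1}(B_{\delta_0})) \ge 0.99$ for $n$ large, or — even simpler — replace $B_{\delta_0}$ throughout by a slightly smaller set that is stable under all the tiny perturbations $g_{\tau_s(e)}$, $|s|\le 1$. Either way, once $\mathbb P(t_{e,s}\in B_{\delta_0})\ge 0.99$ is in hand, the same $\mathrm{Bin}(2^k, 0.99)$ tail bound gives $\le C\cdot 4^{-2^k}$ per path, and combined with $|\mathcal P_k| \le C 4^k 3^{2^k}$ and a union bound this yields the claimed $Ce^{-2^{k-1}}$ for $n$ sufficiently large. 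I would expect the bookkeeping around the perturbed set membership to be the only place requiring genuine thought; everything else is a verbatim repetition of the proof of Lemma~\ref{lem:1}.
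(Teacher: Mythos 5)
Your argument is correct in outline but takes a genuinely different route from the paper. The paper applies part (4) of Lemma~\ref{lem:MW} to the \emph{whole} event: writing the bad event as $\{(t_e)_{e\in\Lambda_k}\in A\}$ with $A$ the set of weight configurations $(z_e)$ for which some $p\in\mathcal P_k$ has $\sum_{e\in p}\bigl(g_{\tau_{s+r}(e)}(z_e)-g_{\tau_s(e)}(z_e)\bigr)$ too small, it bounds $\mathbb P(X\in A)\le C\sqrt{\mathbb P((g_{-\tau_s(e)}(X_e))\in A)}$, and the group property turns the event on the right into \emph{exactly} the event of Lemma~\ref{lem:1}; the square root is where the exponent degrades from $e^{-2^k}$ to $e^{-2^{k-1}}$. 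You instead keep the environment shifted by $s$ and rerun the proof of Lemma~\ref{lem:1} verbatim on the i.i.d.\ weights $(t_{e,s})$, which costs you nothing in the exponent but forces you to re-establish the one input of that proof that is distribution-dependent, namely a uniform lower bound on $\mathbb P(t_{e,s}\in B_{\delta_0})$. You correctly flag this as the only nontrivial point, but you leave it as a sketch with two unexecuted options, and the ``uniform continuity'' option does not follow from the abstract properties of $g_\tau$ as stated without further argument. The cleanest way to close it is in fact the $n=1$ case of part (4) of Lemma~\ref{lem:MW}: taking $A=B_{\delta_0}$ and $\tau=-\tau_s(e)$ there gives
\begin{equation}
    G(B_{\delta_0})=\mathbb P(t_e\in B_{\delta_0})\le e^{\tau_s(e)^2/2}\sqrt{\mathbb P\bigl(g_{\tau_s(e)}(t_e)\in B_{\delta_0}\bigr)},
\end{equation}
hence $\mathbb P(t_{e,s}\in B_{\delta_0})\ge e^{-\tau_s(e)^2}(0.999)^2\ge 0.99$ since $|\tau_s(e)|\le 2^{-k_0}$. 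With that in hand your binomial and union bounds go through (barely: $C4^k(3\cdot 0.2)^{2^k}\le Ce^{-2^{k-1}}$ needs $0.6<e^{-1/2}$, so the constant $0.99$ cannot be weakened much), and the conclusion holds for $n$ large; for the finitely many remaining $n$ one adjusts $C$. So: same essential mechanism (the group law plus the Mermin--Wagner inequality), but you deploy the inequality edge-by-edge to control the perturbed marginal, whereas the paper deploys it once on the full event and never needs to discuss the law of $t_{e,s}$ at all. The paper's version is shorter; yours, once the marginal bound is written out, gives a marginally better exponent and makes transparent that the corollary is really Lemma~\ref{lem:1} for a perturbed weight distribution.
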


\begin{proof}
The corollary follows from Lemma~\ref{lem:1} using Lemma~\ref{lem:MW}. Let us note that this will not be the main use of Lemma~\ref{lem:MW} and that this part can be done using different arguments.

The set $A\subseteq \mathbb R ^{\Lambda _k}$ from Lemma~\ref{lem:MW} will be 
\begin{equation}
    A:=\Big\{ (z_e)_{e\in \Lambda _k} :  \exists p\in \mathcal P _k , \ \sum _{e\in p} g _{\tau _{s+r}(e) }(z_e)-g_{\tau _s(e)}(z_e) \le \frac{\delta _0r}{2\sqrt{\log n}} \Big\}.
\end{equation}
By Lemma~\ref{lem:MW} and the fact that $\|\tau _s\|_2\le C$ we have that
\begin{equation}
\mathbb P ((t_e)_{e\in \Lambda _k} \in A) \le C\sqrt{\mathbb P \big( (g_{-\tau _s(e)}(t_e))_{e\in \Lambda _k} \in A \big) }.    
\end{equation}
It is easy to check (using part (2) of Lemma~\ref{lem:MW}) that the event on the right hand side of the last inequality is precisely the event of Lemma~\ref{lem:1} while the event on the left hand side is the event of the corollary.
\end{proof}

\begin{lem}\label{cor:1}
    There exists $C>0$ depending only on $G$ such that for any $a\ge 0$ we have 
    \begin{equation}
        \int _{-1}^1 \mathbb P \big(  T_r(0,x) \in [a,a+1] \big) dr \le \frac{C}{\sqrt{\log n}}.
    \end{equation}
\end{lem}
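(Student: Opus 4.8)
The strategy is to track how the passage time $T_r(0,x)$ moves as we continuously increase the perturbation parameter $r$, and to show that it sweeps through any interval $[a,a+1]$ with a ``speed'' that is bounded below, so that it can only spend a small fraction of the $r$-interval $[-1,1]$ inside $[a,a+1]$.

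\medskip

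First I would establish a deterministic lower bound on the derivative $\frac{d}{dr}T_r(0,x)$ (or, since $T_r$ need not be differentiable, a lower bound on discrete increments $T_{s+\rho}(0,x)-T_s(0,x)$ for small $\rho>0$). The key geometric point is that any path $p$ connecting $0$ to $x$, and in particular the geodesic $\gamma_s:=\gamma_s(0,x)$ in the modified environment at parameter $s$, must cross every annulus $\Lambda_k$ with $k_0\le k\le k_1$, and while inside $\Lambda_k$ it must traverse at least $2^k$ edges (the annulus has width of order $2^k$). So $\gamma_s$ contains, for each such $k$, a subpath $p_k\in\mathcal P_k$ (or at least a subpath long enough that restricting to $\mathcal P_k$ loses nothing essential after a small truncation). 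By Corollary~\ref{cor:2}, with probability at least $1-Ce^{-2^{k-1}}$, \emph{every} path in $\mathcal P_k$ has its weight increased by at least $\frac{\delta_0\rho}{2\sqrt{\log n}}$ when passing from parameter $s$ to $s+\rho$. Summing the contributions of the $k_1-k_0\approx \frac12\log_2 n$ annuli and using a union bound (the $e^{-2^{k-1}}$ terms are summable and $\le Ce^{-2^{k_0-1}}=o(1)$), we get that on an event $\mathcal G_s$ of probability $\ge 1-o(1)$,
\begin{equation}
T_{s+\rho}(0,x)-T_s(0,x)\ \ge\ \frac{\delta_0\,\rho\,(k_1-k_0)}{2\sqrt{\log n}}\ \ge\ c\rho\sqrt{\log n}.
\end{equation}
One subtlety here is that $\gamma_s$ may not literally contain a path \emph{of length exactly $2^k$ with all edges in $\Lambda_k$}; I would handle this by noting the crossing of the annulus $[-2^{k+1},2^{k+1}]^2\setminus[-2^k,2^k]^2$ uses at least $2^k$ edges all lying in $\Lambda_k$, and if the geodesic re-enters the annulus one simply takes the first crossing — any such crossing segment contains a subsegment in $\mathcal P_k$ after possibly trimming to exactly $2^k$ edges, and trimming only decreases the weight increment by a controlled amount, or one enlarges $\mathcal P_k$ slightly. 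This monotonicity-of-crossings bookkeeping is the main technical nuisance.

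\medskip

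Given the lower bound on increments, the rest is a Fubini/averaging argument. Fix $a\ge 0$. On the good event $\mathcal G$ (intersection of the $\mathcal G_s$ over a suitable mesh of $s$-values, still of probability $1-o(1)$ after summing the $o(1)$ errors over $O(\sqrt{\log n})$ points — or more cleanly, use monotonicity of $r\mapsto T_r(0,x)$, which follows from part (1) of Lemma~\ref{lem:MW} since all $\tau_r(e)\ge 0$ for $r\ge 0$ and $g_\tau$ is increasing in $\tau$, so $r\mapsto T_r(0,x)$ is nondecreasing on $[0,1]$ and nonincreasing... more precisely $T_r$ is monotone in $r$ on all of $[-1,1]$ once we note $\tau_r(e)$ has a fixed sign pattern), the set $\{r\in[-1,1]: T_r(0,x)\in[a,a+1]\}$ is contained in an interval, and on $\mathcal G$ the function moves by at least $c\sqrt{\log n}$ over any sub-interval of length $1$ (wait — I need the increment bound to say it moves fast, i.e. the Lebesgue measure of the level set is $\le C/\sqrt{\log n}$). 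Concretely: on $\mathcal G$, for any $s<s'$ in $[-1,1]$ with $s'-s\ge \frac{C}{\sqrt{\log n}}$ we have $T_{s'}(0,x)-T_s(0,x)\ge 1$, hence $\{r:T_r(0,x)\in[a,a+1]\}$ has length $\le \frac{C}{\sqrt{\log n}}$. Therefore
\begin{equation}
\int_{-1}^1 \mathbb P\big(T_r(0,x)\in[a,a+1]\big)\,dr
= \mathbb E\!\left[\int_{-1}^1 \ind\{T_r(0,x)\in[a,a+1]\}\,dr\right]
\le \mathbb P(\mathcal G^c)\cdot 2 + \frac{C}{\sqrt{\log n}},
\end{equation}
by splitting according to $\mathcal G$, and since $\mathbb P(\mathcal G^c)\le Ce^{-2^{k_0-1}}\le C n^{-c}\le \frac{C}{\sqrt{\log n}}$ for $n$ large, the claim follows.

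\medskip

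I expect the genuine obstacle to be the first step: turning ``$\gamma_s$ crosses $\Lambda_k$'' into ``$\gamma_s$ contains an element of $\mathcal P_k$, to which Corollary~\ref{cor:2} applies,'' uniformly over all $s$ simultaneously, since the geodesic itself depends on $s$ and on the weights. The clean route is to not talk about the geodesic at all: for the increment bound, observe that $T_{s+\rho}(0,x)=\inf_p T_{s+\rho}(p)$, take a near-optimal $p$, and bound $T_{s+\rho}(p)-T_s(p)$ from below using that $p$ must cross each $\Lambda_k$; since Corollary~\ref{cor:2} controls \emph{all} of $\mathcal P_k$ at once, the bound is uniform in the (random, $s$-dependent) choice of $p$. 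Combined with $T_s(0,x)\le T_s(p)$ this gives $T_{s+\rho}(0,x)\ge T_s(0,x)+c\rho\sqrt{\log n}$ on $\mathcal G$, which is exactly what the averaging step needs.
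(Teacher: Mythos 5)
Your proposal is correct and follows essentially the same route as the paper: a mesh of $r$-values of spacing $\asymp 1/\sqrt{\log n}$, Corollary~\ref{cor:2} plus a union bound to build a good event on which every path in every $\mathcal P_k$ gains at least $\delta_0\rho/(2\sqrt{\log n})$ per annulus, the observation that any path from $0$ to $x$ contains a subpath in $\mathcal P_k$ for each $k_0\le k\le k_1$ so the gains sum to $\gtrsim \rho\sqrt{\log n}$, monotonicity of $r\mapsto T_r(0,x)$, and Fubini. Your final "clean route" (bounding via a near-optimal path rather than the geodesic) is exactly how the paper handles the uniformity issue you flagged.
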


\begin{proof}
Let $r_0:= 8/(\delta _0\sqrt{\log n}) $  and let $R:=r_0\mathbb Z \cap [-1,1]$. Define the event
\begin{equation}
    \Omega :=\bigcap _{k=k_0}^{k_1} \bigcap _{s\in  R}  \Big\{ \forall p \in \mathcal P_k, \  T_{s+r_0} (p)-T_{s}(p) > \frac{4}{\log n} \Big\}.  
\end{equation}
Using Corollary~\ref{cor:2} and a union bound we obtain
\begin{equation}\label{eq:Omega}
\mathbb P (\Omega ^c )\le  C\sum _{k=k_0}^{k_1} \sum _{s\in R} e^{-2^{k-1}}   \le  Ce^{-c\sqrt{n}}.
\end{equation}

Next, we write 
\begin{equation}\label{eq:1}
        \int _{-1}^1 \mathbb P \big(  T_r(0,x) \in [a,a+1] \big) dr \le 2\,\mathbb P (\Omega ^c)+  \int _{-1}^1 \mathbb P \big( \Omega , \ T_r(0,x) \in [a,a+1] \big) dr.
    \end{equation}
 By \eqref{eq:Omega}, it suffices to bound the second term on the right hand side of \eqref{eq:1}. To this end, note that any path $p$ connecting $0$ to $x$ will have a subpath in $\mathcal P _k$ for any $k_0\le k\le k_1$. It follows that on $\Omega$ for any path $p$ connecting $0$ and $x$ and for all $s\in R$ we have $T_{s+r_0}(p)-T_s(p)\ge 4(k_1-k_0)/\log n \ge 2$. Hence, on $\Omega $ we have $T_{s+r_0}(0,x)-T_s(0,x)\ge 2$ for all $s\in R$. Thus, using also that $T_r(0,x)$ is increasing in $r$ we obtain that on $\Omega$ there is at most one element $s\in R$ for which $T_s(0,x)\in [a,a+1]$ and that $\big| \big\{r\in [-1,1] : T_r(0,x)\in [a,a+1] \big\}\big| \le 2r_0=16/(\delta _0\sqrt{\log n})$, where in here $|\cdot |$ denotes the Lebesgue measure of the set. Hence, using Fubini's theorem
    \begin{equation*}
        \int _{-1}^1 \mathbb P \big( \Omega , \ T_r(0,x) \in [a,a+1] \big) dr= \mathbb E \Big[ \mathds 1 _{\Omega }\big| \big\{r\in [-1,1] : T_r(0,x)\in [a,a+1] \big\}\big| \Big] \le \frac{16}{\delta _0\sqrt{\log n}}.
    \end{equation*}
    Substituting this estimate into \eqref{eq:1} finishes the proof of the lemma.
\end{proof}

We can now prove Theorem~\ref{thm:small}. 

\begin{proof}[Proof of Theorem~\ref{thm:small}]
    We claim that for any $r\in [0,1]$ and $a>0$ we have \begin{equation}\label{eq:mw}
        \mathbb P \big( T(0,x)\in [a,a+1] \big) \le C \sqrt{ \mathbb P \big( T_r(0,x)\in [a,a+1] \big) \mathbb P \big( T_{-r}(0,x)\in [a,a+1] \big)}.
    \end{equation}
    Unfortunately, this is not a direct use of Lemma~\ref{lem:MW} since $T(0,x)$ depends on infinitely many random variables while in Lemma~\ref{lem:MW} there are finitely many. Let us briefly explain how to overcome this issue using an approximation argument. For $R>n$ define the restricted passage time $T^R(0,x)$ exactly as in \eqref{eq:defT} but when the infimum is over paths restricted to stay in the box $[-R,R]^2$. Note that the restricted passage time depends only on the variables in the box $[-R,R]^2$. Similarly, we define the restricted passage times $T^R_r(0,x)$ and $T^R_{-r}(0,x)$ in the modified environments. Using Lemma~\ref{lem:MW} we obtain for all $R>n$
    \begin{equation}\label{eq:restricted}
        \mathbb P \big( T^R(0,x)\in [a,a+1] \big) \le e^{\|\tau _r\|^2/2} \sqrt{ \mathbb P \big( T^R_r(0,x)\in [a,a+1] \big) \mathbb P \big( T^R_{-r}(0,x)\in [a,a+1] \big)}.
    \end{equation}
    Next, recall that $\gamma (0,x)$ is the geodesic from $0$ to $x$. It is easy to check (and follows from, e.g., \cite[Proposition~4.4]{50years}) that for all $\epsilon >0$ there exists $R$ such that 
    \begin{equation}
     \mathbb P \big( T^R(0,x)\neq T(0,x) \big) \le \mathbb P \big( \gamma (0,x) \nsubseteq [-R,R]^2 \big)  \le \epsilon.    
    \end{equation}
    Similarly, for $R$ sufficiently large 
    \begin{equation}
     \mathbb P \big( T^R_r(0,x)\neq T_r(0,x) \big) \le \epsilon \quad \text{and}\quad \mathbb P \big( T^R_{-r}(0,x)\neq T_{-r}(0,x) \big) \le \epsilon.
    \end{equation}
    We can therefore replace the restricted passage times in \eqref{eq:restricted} with the non restricted passage times and this will change the probabilities by at most $\epsilon$. Taking $\epsilon $ to zero finishes the proof of \eqref{eq:mw}.
    
    Integrating \eqref{eq:mw} over $r\in [0,1]$ and using Cauchy-Schwartz inequality we obtain
    \begin{equation*}
        \mathbb P \big( T(0,x)\in [a,a+1] \big) \le C \sqrt{ \int _0^1\mathbb P \big( T_r(0,x)\in [a,a+1] \big) dr \int_0^1 \mathbb P \big( T_{-r}(0,x)\in [a,a+1] \big)dr}.
    \end{equation*}
    By Lemma~\ref{cor:1} the right hand side of the last equation is bounded by $C/\sqrt{\log n}$ which finishes the proof of the theorem. 
\end{proof}

\bibliographystyle{plain}
\bibliography{biblio}

\end{document}